\newenvironment{eq}{\begin{equation}}{\end{equation}}
\newenvironment{proof}{{\bf Proof}:}{\vskip 5mm }
\newtheorem{proposition}{Proposition}[subsection]
\newtheorem{lemma}[proposition]{Lemma}
\newtheorem{definition}[proposition]{Definition}
\newtheorem{theorem}[proposition]{Theorem}
\newtheorem{remark}[proposition]{Remark}
\newtheorem{problem}[proposition]{Problem}
\newtheorem{construction}[proposition]{Construction}
\newcommand{\llabel}[1]{\label{#1}}
\newcommand{\comment}[1]{}
\newcommand{\sr}{\rightarrow}
\newcommand{\nn}{{\bf N\rm}}
\newcommand{\nat}{\nn}
\begin{document}
\parskip = 2mm
\begin{center}
{\bf\Large Lawvere theories and C-systems\footnote{\em 2000 Mathematical Subject Classification: 
18C10  
08C99, 
03F50, 
18C50  
}}


\vspace{3mm}

{\large\bf Vladimir Voevodsky}\footnote{School of Mathematics, Institute for Advanced Study,
Princeton NJ, USA. e-mail: vladimir@ias.edu}
\vspace {3mm}

{\large\bf December 2015}  
\end{center}

\begin{abstract} In this paper we consider the class of l-bijective C-systems, i.e., C-systems for which the length function is a bijection. The main result of the paper is a construction of an isomorphism between two categories - the category of l-bijective C-systems and the category of Lawvere theories.
\end{abstract}

\tableofcontents

\subsection{Introduction}
In this paper we consider the simplest class of C-systems (contextual categories) - the ones for which the length function $l:Ob(CC)\sr \nat$ is a bijection. It turns out that such C-systems are very closely related to Lawvere theories. We construct 
functors from the category of Lawvere theories to the category of l-bijective C-systems and a functor in the opposite direction and show in Theorem \ref{2015.12.08.th1} that these functors are mutually inverse isomorphisms of the corresponding categories. We emphasize the unexpected aspect of this result which is that we obtain not simply an equivalence but an actual isomorphism of the categories. 

This is essentially the first result on C-systems that includes the description of their homomorphisms. 

Since this paper as well as other papers in the series on C-systems is expected to play a role in the mathematically rigorous construction of  the simplicial univalent representation of the UniMath language and the Calculus of Inductive Constructions and since such a construction itself can not rely on the univalent foundations the paper is written from the perspective of the Zermelo-Fraenkel formalism. 

Constructions and proofs of the paper do not use the axiom of excluded middle or the axiom of choice. 

We use the diagrammatic order in writing compositions, i.e., for $f:X\sr Y$ and $g:Y\sr Z$ we write $f\circ g$ for the composition of $f$ and $g$. 

We do not make precise the concept of a {\em universe} that we use for some of the statements of the paper. It would be certainly sufficient to assume that $U$ is a Grothendieck universe. However, it seems likely that sets $U$ satisfying much weaker conditions can be used both for the statements and for the proofs of our results.

\subsection{The category of Lawvere theories}

For $m\in\nat$ let $stn(m)=\{i\in\nat\,|\,0\le i<m\}$ be the standard set with $m$ elements.

Let 
$$Mor(F) = \cup_{n,m\in\nat}Fun(stn(n),stn(m))$$
where $Fun(X,Y)$ is the set of functions from $X$ to $Y$. We use the definition of a function given in \cite[p.81]{Bourbaki.Sets} where a function $f$ from $X$ to $Y$ is defined as a triple $(X,Y,G)$ where $G$ is an subset in $X\times Y$ satisfying the usual conditions. This means that every function has a well defined domain and codomain which makes it possible to define a category $F$ with the set of objects $\nat$ and the set of morphisms $Mor(F)$ such that for each $n$ and $m$ the set
$$F(m,n):=\{f\in Mor(F)\,|\,dom(f)=stn(m)\,{\rm and}\,codom(f)=stm(n)\}$$
equals to $Fun(stn(m),stn(m)$ and composition, when restricted to these subsets, is the composition of functions. 

For $m,n\in\nat$ let $ii_1^{m,n}:stn(m)\sr stn(m+n)$ and $ii_2^{m,n}:stn(n)\sr stn(m+n)$ be the injections of the initial segment of length $m$ and the concluding segment of length $n$.

\begin{definition}
\llabel{2015.11.24.def1}
A Lawvere theory structure on a category $T$ is a functor $L:F\sr T$ such that the following conditions hold:
\begin{enumerate}
\item $L$ is a bijection on the sets of objects,
\item $L(0)$ is an initial object of $T$,
\item for any $m,n\in\nat$ the square
$$
\begin{CD}
L(0) @>>> L(n)\\
@VVV @VVL(ii_2^{m,n}) V\\
L(m) @>L(ii_1^{m,n})>> L(m+n)
\end{CD}
$$
is a push-out square.
\end{enumerate}
A Lawvere theory is a pair $(T,L)$ where $T$ is a category and $L$ is a Lawvere theory structure on $T$. 
\end{definition}

Let us denote the set of Lawvere theory structures on $T$ by $Lw(T)$. 
\begin{problem}\llabel{2015.12.18.prob2}
For a universe $U$, to construct a category $LW(U)$ of Lawvere theories in $U$.
\end{problem}
\begin{construction}\rm\llabel{2015.12.18.constr2}
Following Lawvere \cite[p. 61]{Lawvere} we define a morphism from a Lawvere theory ${\bf T}_1=(T_1,L_1)$ to a Lawvere theory ${\bf T}_2=(T_2,L_2)$ as a functor $G:T_1\sr T_2$ such that $L_1\circ G=L_2$. We let $Hom_{LW}({\bf T}_1,{\bf T}_2)$ denote the subset in the set of functors from $T_1$ to $T_2$ that are morphisms of the Lawvere theories. 

Note that here one uses the equality rather than isomorphism of functors. The composition of morphisms is defined as composition of functors. The identity morphism is the identity functor. The associativity and the left and right unity axioms follow immediately from the corresponding properties of the composition of functors. 

We let $Ob(LW(U))$ denote the set of Lawvere theories in $U$ and $Mor(LW(U))$ the set
$$Mor(LW(U))=\coprod_{{\bf T}_1,{\bf T}_2\in Ob(LW(U))}Hom_{LW}({\bf T}_1,{\bf T}_2)$$
Together with the obvious domain, codomain, identity and composition functions the pair of sets $Ob(LW(U))$ and $Mor(LW(U))$ form a category that we denote $LW(U)$ and call the category of Lawvere theories in $U$. 

Note that a morphism from ${\bf T}_1$ to ${\bf T}_2$ in this category is not a morphism of Lawvere theories but an iterated pair $(({\bf T}_1,{\bf T}_2),G)$ where $G$ is a morphism of Lawvere theories. However, there is an obvious bijection
$$Mor_{LW(U)}({\bf T}_1,{\bf T}_2)\sr Hom_{LW}({\bf T}_1,{\bf T}_2)$$
that we will use in both directions as a {\em coercion}, in the terminology of the proof assistant Coq, i.e., every time we have an expression which denotes an element of one of these sets in a position where an element of the other is expected it is replaced by its image under this bijection. 
\end{construction}
We will use below the following lemma.
\begin{lemma}
\llabel{2015.11.28.l4}
Let $T$ be a category and $L:F\sr T$ a functor such that the following conditions hold:
\begin{enumerate}
\item $L(0)$ is an initial object of $T$,
\item for any $m\in\nat$ the square
\begin{eq}\llabel{2015.11.28.eq2}
\begin{CD}
L(0) @>>> L(1)\\
@VVV @VVL(ii^{m,1}_2) V\\
L(m) @>L(ii^{m,1}_1)>> L(m+1)
\end{CD}
\end{eq}
is a push-out square.
\end{enumerate}
The for any $m,n\in\nat$ the square 
$$
\begin{CD}
L(0) @>>> L(n)\\
@VVV @VVL(ii_2^{m,n}) V\\
L(m) @>L(ii_1^{m,n})>> L(m+n)
\end{CD}
$$
is a push-out square.
\end{lemma}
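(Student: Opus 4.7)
The plan is to proceed by induction on $n$, keeping $m$ as a parameter. The base case $n=0$ is essentially trivial: the map $ii_1^{m,0}$ is the identity on $stn(m)$ and $ii_2^{m,0}$ is the unique function from the empty set, so the square in question has identities on the top and bottom rows and the initial-object map $L(0)\sr L(m)$ as its vertical arrows; such a square is automatically a pushout because $L(0)$ is initial.

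For the inductive step, assume the $(m,n)$-square is a pushout. A routine check with the formulas for $ii^{m,n}_k$ gives the identities $ii_1^{m,n+1} = ii_1^{m,n}\cdot ii_1^{m+n,1}$ and $ii_1^{n,1}\cdot ii_2^{m,n+1} = ii_2^{m,n}\cdot ii_1^{m+n,1}$ in $F$. Applying $L$ and reading these off, the $(m,n+1)$-square decomposes horizontally as the $(m,n)$-square on the left pasted to the square
$$
\begin{CD}
L(n) @>L(ii_1^{n,1})>> L(n+1)\\
@VL(ii_2^{m,n})VV @VVL(ii_2^{m,n+1})V\\
L(m+n) @>L(ii_1^{m+n,1})>> L(m+n+1)
\end{CD}
$$
on the right.

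To show that this right square is a pushout I would stack it vertically beneath the $(n,1)$-pushout square supplied by the hypothesis. The outer rectangle of that vertical stacking has top row $L(0)\sr L(1)$ and bottom row $L(m+n)\sr L(m+n+1)$; its left vertical is the unique initial map $L(0)\sr L(m+n)$ (by initiality of $L(0)$) and its right vertical equals $L(ii_2^{m+n,1})$ by another direct computation in $stn$. Hence the outer rectangle is exactly the $(m+n,1)$-pushout of the hypothesis. Since the top square of the stacking and the outer rectangle are both pushouts, the cancellation form of the pasting lemma for pushout squares forces the bottom (= right) square to be a pushout. A second, forward, application of the pasting lemma to the horizontal decomposition from the previous paragraph then shows that the $(m,n+1)$-square is a pushout, completing the induction.

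The only real obstacle is the bookkeeping needed to justify the two compositional identities in $F$ and to verify that the outer rectangle of the vertical stacking is indexed exactly by $(m+n,1)$; once those identifications are made, the pasting lemma for pushouts does all the categorical work, and neither assertion uses anything beyond the two hypotheses of the lemma.
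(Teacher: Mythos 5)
Your proposal is correct and follows essentially the same route as the paper: the same base case ($n=0$ via horizontal isomorphisms/initiality), the same horizontal decomposition of the $(m,n+1)$-square into the $(m,n)$-square and the auxiliary square, and the same cancellation argument identifying the pasting of the $(n,1)$- and auxiliary squares with the $(m+n,1)$-hypothesis square via $ii_2^{n,1}\circ ii_2^{m,n+1}=ii_2^{m+n,1}$. The only cosmetic difference is that the paper runs the cancellation step with the squares reflected across the diagonal and pasted horizontally, whereas you stack them vertically.
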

\begin{proof}
Let $m,n\in\nat$. Consider first the diagram
$$
\begin{CD}
L(0) @>>> L(n) @>L(ii^{m,n}_2)>> L(m+n)\\
@VVV @VVL(ii^{n,1}_1)V @VVL(ii^{m+n,1}_1)V\\
L(1) @>L(ii^{n,1}_2)>> L(n+1) @>L(ii^{m,n+1}_2)>> L(m+n+1)
\end{CD}
$$
Then first square is the reflection relative to the diagonal of a square of the form  \ref{2015.11.28.eq2} and therefore it is a push-out square. 

We have $ii^{n,1}_2\circ  ii^{m,n+1}_2=ii^{m+n,1}_2$. Therefore the large square  is the reflection relative to the diagonal of a square of the form  \ref{2015.11.28.eq2} and therefore it is a push-out square. 

The right hand side square is commutative. 

By the general properties of push-out squares which are obtained from the similar properties of the pull-back squares by inversion of the direction of arrows we conclude that the right hand side square is a push-out square.

To prove the lemma proceed now by induction on $n$.

For $n=0$ the horizontal arrows are isomorphisms and since the square commutes it is a push-out square.

For $n=1$ it is a square of the form (\ref{2015.11.28.eq2}).

For the successor consider the diagram
$$
\begin{CD}
L(0) @>>> L(n) @>L(ii^{n,1}_1)>> L(n+1)\\
@VVV @VVL(ii^{m,n}_2)V @VVL(ii^{m,n+1}_2) V\\
L(m) @>L(ii^{m,n}_1)>> L(m) @>L(ii^{m+n,1}_1)>> L(m+n+1)
\end{CD}
$$
The first square is push-out by the inductive assumption. The second square is push-out by the first part of the proof. Therefore the ambient square is push-out. Since $ii^{m,n}_1\circ ii^{m+n,1}_1=ii^{m,n+1}_1$ this completes the proof of the lemma.
\end{proof}

\subsection{The category of l-bijective C-systems}

For the definition of a $C$-system see \cite{Cartmell0}, \cite{Cartmell1} (where they are called contextual categories) as well as \cite{Csubsystems}. A C-system structure on a category $CC$ is a six-tuple $cs=(l,pt,ft,p,q,s)$ where $l,ft$ and $p$ are functions, $pt$ an element of $Ob(CC)$, $q$ a partial function on pairs and $s$ a partial function. To be a C-system structure these objects must satisfy the conditions of \cite[Definitions 2.1 and 2.3]{Csubsystems}. 
\begin{definition}
\llabel{2015.11.24.def2}
A l-bijective C-system is a C-system such that the length function $Ob(CC)\sr\nat$ is a bijection.
\end{definition}
We let $Cs_{\nat}(CC)$ denote the set of l-bijective C-system structures on the category $CC$. 
\begin{problem}\llabel{2015.12.18.prob3}
Let $U$ be a universe. To construct a category $CS(U)$ of C-systems in $U$.
\end{problem}
\begin{construction}\rm
A morphism of C-systems is a functor between the underlying categories that is compatible with the corresponding C-system structures. For a detailed definition see \cite[Definition 3.1]{Cfromauniverse}. We let $Hom_{CS}(CC_1,CC_2)$ denote the set of homomorphisms from the C-system $CC_1$ to the C-system $CC_2$. 

That the composition of functors that are homomorphisms is again a homomorphism is stated in \cite[Lemma 3.2]{Cfromauniverse}. That the identity functor is a homomorphism is very easy to prove. The associativity and the left and right unity axioms for the composition of homomorphisms follow directly from the similar properties of the composition of functors. 

Repeating the approach that we used with Lawvere theories we obtain the category $CS(U)$ of C-systems in $U$.
\end{construction}
We let $Cs{\nat}(U)$ denote the full subcategory in $CS(U)$ that consists of l-bijective C-systems. 

The following result shows that l-bijective C-systems are abundant in the world of C-systems.

\begin{theorem}
\llabel{2015.12.26.th1}
Let $C$ be a C-system. Let $X$ be an object of $C$ such that $l(X)=1$. Let $C_X$ be the smallest C-subsystem of $C$ that is a full subcategory of $C$ and that contains $X$. Then $C_X$ is an l-bijective C-system.
\end{theorem}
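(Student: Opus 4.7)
The plan is to give an explicit combinatorial description of $C_X$ and to read l-bijectivity off directly from the description. Since $l(X)=1$, the father $ft(X)$ is the (unique) object of length $0$, namely $pt$, which is terminal in $C$. Define a sequence of objects of $C$ by $X_0 := pt$, $X_1 := X$, and, for $n\geq 1$, $X_{n+1} := (!_{X_n})^*(X_1)$, where $!_{X_n}\colon X_n \to pt$ is the unique morphism into $pt$ and $(!_{X_n})^*(X_1)$ denotes the domain of $q(!_{X_n}, X_1)$. The C-system axioms give $l(X_{n+1})=l(X_n)+1$ and $ft(X_{n+1})=X_n$, so the sequence realizes each natural number exactly once as a length.

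Let $D \subseteq C$ be the full subcategory on $\{X_n\,|\,n\in\nat\}$; I would verify that $D$ is a C-subsystem. Three of the closure conditions are immediate: $pt=X_0 \in D$; $ft(X_{n+1})=X_n \in D$; and the canonical projections $p_{X_{n+1}}$ together with any sections $s(f)$ that are morphisms of $C$ between objects of $D$ lie in $D$ by fullness. The one substantive condition is closure under the $q$-operation.

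Given an arbitrary morphism $f\colon X_j \to X_k = ft(X_{k+1})$ between objects of $D$, invoking compatibility of the $q$-operation with composition together with the terminality of $pt$ yields
\[
f^*(X_{k+1}) \;=\; f^*\bigl((!_{X_k})^*(X_1)\bigr) \;=\; (f\circ !_{X_k})^*(X_1) \;=\; (!_{X_j})^*(X_1) \;=\; X_{j+1},
\]
so $q(f,X_{k+1})\colon X_{j+1}\to X_{k+1}$ is a morphism of $C$ between objects of $D$ and hence lies in $D$ by fullness. This is the heart of the proof and, I expect, the main obstacle: one must use the strict equality of iterated $q$-pullbacks rather than a mere isomorphism, which is precisely what the C-system axioms provide, and one must also exploit that every morphism with target $pt$ is unique, which is why the hypothesis $l(X)=1$ is essential.

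Finally, to identify $D$ with $C_X$, I would show by induction on $n$ that any full C-subsystem $C'$ of $C$ containing $X$ must contain every $X_n$. The base cases $X_0=pt$ and $X_1=X$ are clear. For the induction step, if $X_n\in C'$, then $!_{X_n}\colon X_n\to pt$ is in $C'$ by fullness, and so $q$-closure of $C'$ applied to this morphism and to $X_1\in C'$ yields $X_{n+1}\in C'$. Hence $D = C_X$, and the length function on $C_X$ sends $X_n\mapsto n$, which is a bijection onto $\nat$.
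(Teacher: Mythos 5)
Your proposal is correct and follows essentially the same route as the paper: the same inductive tower $X_{n+1}=(!_{X_n})^*(X_1)$, the same key computation $f^*(X_{k+1})=(f\circ !_{X_k})^*(X_1)=X_{j+1}$ using strict functoriality of the $q$-pullback and terminality of $pt$. Your explicit minimality argument identifying $D$ with $C_X$ is a detail the paper leaves as ``straightforward,'' so you have if anything filled in slightly more.
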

\begin{proof}
Recall that we let $pt$ denote the object of $C$ of length $0$ and for an object $Y$ of $C$ we let $\pi_Y$ denote the unique morphism $Y\sr pt$. Define objects $X^{*n}$ for $n\in\nat$ inductively as follows:
\begin{enumerate}
\item $X^{*0}=pt$, 
\item $X^{*1}=X$,
\item $X^{*(n+1)}=(\pi_{X^{*n}})^*(X)$.
\end{enumerate}
Let $C_X$ be the full subcategory of $C$ that is generated by objects $X^{*n}$ for $n\in\nat$. Let us show that it is a C-subsystem of $C$. Since it is a full subcategory, i.e., contains all morphisms between its objects it is sufficient to show that it is closed under the operations of C-systems that generate objects.

We have $pt\in C_X$.

For $X^{*n}\in C_X$ we have $ft(X^{*n})\in C_X$. This is proved by an easy induction on $n$.

For $f:X^{*m}\sr X^{*n}$ we have $f^*(X^{*(n+1)})\in C_X$. Indeed, by definition we have $X^{*(n+1)}=(\pi_{X^{*n}})^*(X)$. Therefore
$$f^*(X^{*(n+1)})=f^*((\pi_{X^{*n}})^*(X))=(f\circ \pi_{X^{*n}})^*(X)=\pi_{X^{*m}}^*(X)=X^{*(m+1)}$$
To finish the proof of the theorem it remains to show that $C_X$ is an l-bijective C-system which is straightforward. 
\end{proof}

\subsection{A functor from Lawvere theories to l-bijective C-systems}
\begin{problem}
\llabel{2015.11.24.prob1}
For a category $T$ to construct a function 
$$LC:Lw(T)\sr Cs_{\nat}(T^{op})$$
from the Lawvere theory structures on $T$ to the l-bijective C-system structures on $T^{op}$. 
\end{problem}
\begin{construction}\rm
\llabel{2015.11.24.constr1}
Let $CC=T^{op}$. We need to construct a l-bijective C-system structure on $CC$. We set:

The length function $l=L^{-1}$. 

The distinguished final object $pt$ is $L(0)$. 

The map $ft:Ob(CC)\sr Ob(CC)$ maps $pt$ to $pt$ and any object $X$ such that $l(X)>0$ to $L(l(X)-1)$.

For $pt$ the morphism $p_{pt}$ is the identity. For $X$ such that $l(X)>0$ the morphism $p_X:X\sr ft(X)$ is $L(ii_1^{l(X)-1,1})$.

To define $q(f,X)$ observe first that for any $X$ such that $l(X)>0$ we have a pull-back square in $CC$ of the form
\begin{eq}\llabel{2015.11.24.eq1}
\begin{CD}
X @>L(ii_2^{l(X)-1,1})>> L(1)\\
@VL(ii_1^{l(X)-1,1})VV @VVV\\
ft(X) @>>> L(0)
\end{CD}
\end{eq}
Given $f:Y\sr X$ we set
$$f^*(X)=L(l(Y)+1)$$
Since (\ref{2015.11.24.eq1}) is a pull-back square and $L(0)$ is a final object there is a unique morphism $q(f,X):f^*(X)\sr X$ such that 
\begin{eq}
\llabel{2015.12.02.eq7}
q(f,X)\circ p_X=p_{f^*(X)}\circ f
\end{eq}
and
\begin{eq}
\llabel{2015.12.02.eq8}
q(f,X)\circ L(ii_2^{l(X)-1,1})=L(ii_2^{l(Y),1})
\end{eq}
Let us check the conditions of \cite[Definition 2.1]{Csubsystems} that will show that we obtained a C0-system. We have $l^{-1}(0)=L(0)=\{pt\}$. For $X$ such that $l(X)>0$ we have $l(ft(X))=l(L(l(X)-1))=l(X)-1$. We also have $ft(pt)=pt$. The object $pt=L(0)$ is final. 

The square
\begin{eq}\llabel{2015.11.24.eq2}
\begin{CD}
f^*(X) @>q(f,X)>> X \\
@Vp_{f^*(X)}VV @Vp_XVV\\
Y @>f>> ft(X)
\end{CD}
\end{eq}
commutes by the construction of $q(f,X)$. 

If $f=Id_{ft(X)}$ then $l(Y)=l(X)$ and therefore $f^*(X)=X$. Therefore $q(f,X)\circ p_X=p_X$ and $q(f,X)\circ L(ii_2^{l(X)-1,1})=L(ii_2^{l(X)-1,1})$ which proves that $q(f,X)=Id_X$.

Given $g:Z\sr Y$ we have to verify that $q(g\circ f,X)=q(g,f^*(X))\circ q(f,X)$. 
We have $(g\circ f)^*(X)=L(l(Z)+1)=g^*(f^*(X))$.  Taking into account that (\ref{2015.11.24.eq1}) is a pull-back square, it remains to verify two equalities
$$q(g\circ f,X)\circ p_X=q(g,f^*(X))\circ q(f,X)\circ p_X$$
and
$$q(g\circ f,X)\circ L(ii_2^{l(X)-1,1})=q(g,f^*(X))\circ q(f,X)\circ L(ii_2^{l(X)-1,1})$$
For the first equality we have
$$q(g\circ f,X)\circ p_X=p_{(g\circ f)^*(X)}\circ g\circ f=p_{g^*(f^*(X))}\circ g\circ f=$$$$q(g,f^*(X))\circ p_{f^*(X)}\circ f=q(g,f^*(X))\circ q(f,X)\circ p_X$$
and for the second
$$q(g\circ f,X)\circ L(ii_2^{l(X)-1,1})=L(ii_2^{l(Z),1})=q(g,f^*(X))\circ L(ii_2^{l(Y),1})=q(g,f^*(X))\circ q(f,X)\circ L(ii_2^{l(X)-1,1})$$
According to \cite[Proposition 2.4]{Csubsystems} it remains to show that the squares (\ref{2015.11.24.eq2}) are pull-back squares. 

Consider the diagram 
$$
\begin{CD}
f^*(X) @>q(f,X)>> X @>L(ii_2^{l(X)-1,1})>> L(1)\\
@Vp_{f^*(X)}VV @Vp_XVV @VVV\\
Y @>f>> ft(X) @>>> L(0)
\end{CD}
$$
where both the right hand side square and the outside square are of the form (\ref{2015.11.24.eq1}) and in particular are pull-back squares and the left hand side square has been shown to be commutative. Therefore, the left hand side square is a pull-back square. We conclude, by \cite[Proposition 2.4]{Csubsystems} that there exists a unique $s$ such that $(l,pt,p,q,s)$ is a C-system structure. 
\end{construction}
\begin{lemma}\llabel{2015.12.06.prob1}
Let $G:(T_1,L_1)\sr (T_2,L_2)$ be a morphism of Lawvere theories. Then the functor $G^{op}$ is a homomorphism of C-systems $(T_1^{op},LC(L_1))\sr (T_2^{op},LC(L_2))$. 
\end{lemma}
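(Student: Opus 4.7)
The plan is to verify, one by one, that $G^{op}$ preserves each piece of the C-system structure defined in Construction \ref{2015.11.24.constr1}. The key fact available throughout is that $L_1\circ G = L_2$ (by definition of a morphism of Lawvere theories), which says $G(L_1(n)) = L_2(n)$ on objects and $G(L_1(\phi)) = L_2(\phi)$ for every $\phi \in Mor(F)$.

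First I would handle the easy pieces. Preservation of the length function follows from $G(L_1(n))=L_2(n)$ and the definitions $l_i = L_i^{-1}$. Preservation of $pt = L_i(0)$ is the case $n=0$. Preservation of $ft$ reduces, on non-$pt$ objects, to the identity $G(L_1(l_1(X)-1)) = L_2(l_2(G^{op}(X))-1)$ which again follows from $L_1\circ G = L_2$ together with the preservation of $l$. Preservation of the morphisms $p_X$ for $l(X)>0$ is the equation $G(L_1(ii_1^{l(X)-1,1})) = L_2(ii_1^{l(X)-1,1})$, a special case of the hypothesis applied to the morphism $ii_1^{l(X)-1,1}$ of $F$; the $pt$-case is trivial since $p_{pt} = Id$.

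Next I would treat the $q$-operation, which is the real content. Given $f\colon Y\to X$ in $CC_1 = T_1^{op}$ with $l_1(X)>0$, the object $f^*(X) = L_1(l_1(Y)+1)$ is mapped by $G^{op}$ to $L_2(l_2(G^{op}(Y))+1)$, which is $(G^{op}(f))^*(G^{op}(X))$. For the morphism $q(f,X)$, recall it is characterized uniquely by the pair of equations (\ref{2015.12.02.eq7}) and (\ref{2015.12.02.eq8}); I would apply $G^{op}$ to both equations and use the already-established preservation of $p$ and of the generating morphisms $L(ii_2^{-,1})$ to see that $G^{op}(q(f,X))$ satisfies the defining equations for $q(G^{op}(f), G^{op}(X))$ in $CC_2$. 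By uniqueness of solutions to these equations (which relies on square (\ref{2015.11.24.eq1}) being a pull-back in $CC_2$, guaranteed by $L_2$ being a Lawvere theory structure), the two morphisms coincide.

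Finally, preservation of $s$ is automatic: as noted at the end of Construction \ref{2015.11.24.constr1}, the operation $s$ is uniquely determined from $(l,pt,p,q)$ via \cite[Proposition 2.4]{Csubsystems}, so its preservation by $G^{op}$ follows from preservation of the other data. I expect no genuine obstacle here; the exercise is essentially bookkeeping, and the only mildly delicate step is to keep careful track of the $T \leftrightarrow T^{op}$ direction reversal when checking that the defining pullback square for $q$ in $CC_2$ is the one whose uniqueness we are invoking.
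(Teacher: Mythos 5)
Your proposal is correct and follows essentially the same route as the paper: verify preservation of $l$, $pt$, $ft$, $p$ directly from $L_1\circ G=L_2$, then handle $q$ by applying $G^{op}$ to the two defining equations (\ref{2015.12.02.eq7}) and (\ref{2015.12.02.eq8}) and invoking uniqueness from the pull-back square (\ref{2015.11.24.eq1}). The only cosmetic difference is that the paper dispenses with $s$ by citing \cite[Lemma 3.4]{Cfromauniverse} up front, whereas you argue its automatic preservation via \cite[Proposition 2.4]{Csubsystems}; these amount to the same thing.
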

\begin{proof}\rm\llabel{2015.12.06.constr1}
For convenience we will write $H$ instead of $G^{op}$. In view of \cite[Lemma 3.4]{Cfromauniverse} it is sufficient to verify that $H$ is compatible with the length function, distinguished final object, $ft$ map, $p$-morphisms and $q$-morphisms. 

The fact that it $l_1=H\circ l_2$ is equivalent to the fact that $L_1\circ G=L_2$. 

The fact that $H(L_1(0))=L_2(0)$ follows from the same property of $G$.

The fact that $H(ft(X))=ft(H(X))$ again follows from the same property of $G$.

The fact that $H(p_X)=p_{H(X)}$ follows from the fact that $L_1\circ G=L_2$ on objects and on morphisms of the form $ii_1^{n,1}$.

It remains to verify that for $X$ such that $l(X)>0$ and $f:Y\sr ft(X)$ one has 
$$H(q(f,X))=q(H(f),H(X))$$
where the right hand side is defined because $H$ is compatible with $l$ and $ft$. 

Morphism $q(H(f),H(X))$ is defined as the unique morphism such that 
$$q(H(f),H(X))\circ p_{H(X)}=p_{H(f)^*(H(X))}\circ H(f)$$
and
$$
q(H(f),H(X))\circ L_2(ii_2^{l_2(H(X))-1,1})=L_2(ii_2^{l_2(H(Y)),1})
$$
Therefore we need to verify the same equalities for the morphism $H(q(f,X))$. For the first equality we have
$$H(q(f,X))\circ p_{H(X)}=H(q(f,X))\circ H(p_X)=H(q(f,X)\circ p_X)=H(p_{f^*(X)}\circ f)=p_{H(f^*(X))}\circ H(f)$$
It remains to show that $H(f^*(X))=H(f)^*(H(X))$. It follows from the fact that
$f^*(X)=L_1(l_1(X)+1)$ and $H(f)^*(H(X))=L_2(l_2(H(X))+1)$.

For the second equality we have 
$$H(q(f,X))\circ L_2(ii_2^{l_2(H(X))-1,1})=H(q(f,X))\circ H(L_1(ii_2^{l_1(X)-1,1}))=H(q(f,X)\circ L_1(ii_2^{l_1(X)-1,1}))=$$$$H(L_1(ii_2^{l_1(Y),1}))=L_2(ii_2^{l_2(H(Y)),1})$$
This completes the construction. 
\end{proof}
\begin{problem}\rm\llabel{2015.12.08.prob1}
To construct, for any universe $U$, a functor $LC_U:LW(U)\sr LS_{\nat}(U)$.
\end{problem}
\begin{construction}\rm\llabel{2015.12.08.constr1}
We set $LC_{Ob}$ to be the function that takes a Lawvere theory to the opposite category of its underlying category with the C-system structure defined by Construction \ref{2015.11.24.constr1}. We set $LC_{Mor}$ to be the function that takes a functor $G$ that is a morphism of Lawvere theories to $G^{op}$. It is well defined by Lemma \ref{2015.12.06.prob1}. That the functions $(LC_{Ob},LC_{Mor})$ form a functor, i.e., commute with the identity morphisms and compositions is straightforward.
\end{construction}

\begin{remark}\rm
\llabel{2015.11.24.rem1}
The morphism $p:L(1)\sr L(0)$, pull-back squares (\ref{2015.11.24.eq1}) and the final object $L(0)$ make $T^{op}$ into a universe category $(T^{op},p)$ (see \cite{Cfromauniverse}). It is easy to prove that the C-system of Construction \ref{2015.11.24.constr1} is isomorphic to the C-system $CC(T^{op},p)$ of this universe category. However this isomorphism is not an equality since the set of objects  of the category $CC(T^{op},p)$ is not equal to the set of objects of $T^{op}$. Indeed, at object of $CC(T^{op},p)$ is a sequence of the form
$$(pt;\pi_{L(0)},\dots,\pi_{L(n)})$$
where $\pi_{L(i)}$ is the unique morphism  $L(i)\sr L(0)$. 
\end{remark}

\subsection{A functor from l-bijective C-systems to Lawvere theories}
\begin{problem}\llabel{2015.11.24.prob2}
For a category $CC$ to construct a function 
$$CL:Cs_{\nat}(CC)\sr Lw(CC^{op})$$
\end{problem}
To perform a construction we will need a number of lemmas and intermediate constructions. Let us fix a category $CC$ and a l-bijective C-system structure $cs=(l,pt,ft,p,q,s)$ on $CC$. We will often write $CC$ both for the category and for the C-system $(CC,cs)$. 
\begin{problem}
For $n\in\nat$ and $i=0,\dots,n-1$ to construct a morphism $\pi^n_i:l^{-1}(n)\sr l^{-1}(1)$ in $CC$. 
\end{problem}
\begin{construction}
\rm\llabel{2015.11.28.constr1}
By induction on $n$. 

For $n=0$ there are no morphisms to construct. 

For $n=1$ we set $\pi^1_0=Id_{l^{-1}(1)}$.

For the successor consider the canonical square:
\begin{eq}
\llabel{2015.11.28.eq1}
\begin{CD}
l^{-1}(n+1) @>q(\pi,l^{-1}(1))>> l^{-1}(1)\\
p_{l^{-1}(n+1)}@VVV @VVVp_{l^{-1}(1)}\\
l^{-1}(n)@>\pi>>l^{-1}(0)
\end{CD}
\end{eq}
where we use $\pi$ to denote the unique morphisms from objects of $CC$ to the final object $l^{-1}(0)$. We set
$$
\pi^{n+1}_i=\left\{
\begin{array}{ll}
p_{l^{-1}(n+1)}\circ \pi^n_i&\mbox{\rm for $i<n$}\\
q(\pi,l^{-1}(1))&\mbox{\rm for $i=n$}
\end{array}
\right.
$$
\end{construction}
\begin{problem}
For any $m,n\in\nat$ and a function $f:stn(m)\sr stn(n)$ to construct a morphism $L_f:l^{-1}(n)\sr l^{-1}(m)$ in $CC$. 
\end{problem}
\begin{construction}\rm
\llabel{2015.11.28.constr2}
By induction on $m$. 

For $m=0$ we set $L_f=\pi$.

For $m=1$ we set $L_f=\pi^n_{f(0)}$.

For the successor consider $f:stn(m+1)\sr stn(n)$ and the square (\ref{2015.11.28.eq1}) for $m+1$. We define $L_f$ as the unique morphism such that:
\begin{eq}\llabel{2015.11.30.eq2}
L_f\circ p_{l^{-1}(m+1)}=L_{ii_1^{m,1}\circ f}
\end{eq}
and
\begin{eq}\llabel{2015.11.30.eq3}
L_f\circ q(\pi,l^{-1}(1))=L_{ii_2^{m,1}\circ f}
\end{eq}
where, let us recall, 
$$ii_1^{m,1}:stn(m)\sr stn(m+1)$$
$$ii_2^{m,1}:stn(1)\sr stn(m+1)$$
are the morphism that define the representation
$$stn(m+1)=stn(m)\amalg \{m+1\}$$
\end{construction}
\begin{lemma}
\llabel{2015.11.28.l1}
Let $m,n\in\nat$ and $f:stn(m)\sr stn(n)$. The for any $i=0,\dots, m-1$ one has
$$L_f\circ \pi^m_i=\pi^n_{f(i)}$$
\end{lemma}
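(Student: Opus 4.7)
The plan is to prove this by induction on $m$, using the fact that both $\pi^m_i$ and $L_f$ are defined recursively via the canonical square (\ref{2015.11.28.eq1}) and its pull-back property.

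The base case $m=0$ is vacuous, as there are no admissible $i$. For $m=1$, the only admissible value is $i=0$, and by Construction \ref{2015.11.28.constr1} we have $\pi^1_0 = Id_{l^{-1}(1)}$, so the required equality $L_f \circ \pi^1_0 = \pi^n_{f(0)}$ reduces directly to the defining equation $L_f = \pi^n_{f(0)}$ from the $m=1$ clause of Construction \ref{2015.11.28.constr2}.

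For the inductive step, fix $f : stn(m+1) \sr stn(n)$ and $i \in \{0, \ldots, m\}$, and split according to the definition of $\pi^{m+1}_i$. If $i < m$, then $\pi^{m+1}_i = p_{l^{-1}(m+1)} \circ \pi^m_i$, so by associativity and the defining equation (\ref{2015.11.30.eq2}) we get
$$L_f \circ \pi^{m+1}_i = (L_f \circ p_{l^{-1}(m+1)}) \circ \pi^m_i = L_{ii_1^{m,1} \circ f} \circ \pi^m_i.$$
Applying the inductive hypothesis to the map $ii_1^{m,1} \circ f : stn(m) \sr stn(n)$ yields $\pi^n_{(ii_1^{m,1} \circ f)(i)}$, and since $ii_1^{m,1}$ is the initial-segment inclusion, this is just $\pi^n_{f(i)}$.

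If $i = m$, then $\pi^{m+1}_m = q(\pi, l^{-1}(1))$, so by (\ref{2015.11.30.eq3}) we get $L_f \circ \pi^{m+1}_m = L_{ii_2^{m,1} \circ f}$. The composite $ii_2^{m,1} \circ f$ has domain $stn(1)$, and by the $m=1$ clause of Construction \ref{2015.11.28.constr2} it equals $\pi^n_{(ii_2^{m,1} \circ f)(0)}$; since $ii_2^{m,1}(0) = m$, this is $\pi^n_{f(m)} = \pi^n_{f(i)}$, completing the case and hence the induction. The only real subtlety is book-keeping — matching the two recursive clauses of $\pi^m_i$ against the two defining equations of $L_f$, and verifying the simple identities $(ii_1^{m,1} \circ f)(i) = f(i)$ for $i < m$ and $(ii_2^{m,1} \circ f)(0) = f(m)$ — but no nontrivial pull-back or universal-property argument beyond what is already packaged in Construction \ref{2015.11.28.constr2} is needed.
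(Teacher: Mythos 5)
Your proof is correct and follows essentially the same route as the paper's: induction on $m$, with the $m=0$ case vacuous, the $m=1$ case immediate from the definitions of $\pi^1_0$ and $L_f$, and the successor case split on $i<m$ versus $i=m$ using the defining equations (\ref{2015.11.30.eq2}) and (\ref{2015.11.30.eq3}). Your explicit unwinding of $L_{ii_2^{m,1}\circ f}=\pi^n_{f(m)}$ via the $m=1$ clause is a small step the paper compresses, but the argument is identical.
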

\begin{proof}
By induction on $m$.

For $m=0$ there is nothing to prove.

For $m=1$ we need to prove that $L_f\circ \pi^n_0=\pi^n_{f(0)}$. By construction, $\pi^n_0=Id_{l^{-1}(1)}$ and $L_f=\pi^n_{f(0)}$ which implies the goal. 

For the successor $f:stn(m+1)\sr stn(n)$ and the square (\ref{2015.11.28.eq1}) for $m+1$. 

If $i=m$ then $\pi_i^{m+1}=q(\pi,l^{-1}(1))$ and by the construction of $L_f$ we have $L_f\circ q(\pi,l^{-1}(1))=\pi^n_{f(m)}$.

If $i<m$ then $\pi^{m+1}_i=p_{l^{-1}(m+1)}\circ \pi_i^m$. Therefore
$$L_f\circ \pi^{m+1}_i=L_f\circ p_{l^{-1}(m+1)}\circ \pi^m_i=L_{ii^{m,1}_1\circ f}\circ \pi^m_i$$
and by the inductive assumption
$$L_{ii^{m,1}_1\circ f}\circ \pi^m_i=\pi^n_{f(ii^{m,1}_1)}=\pi^n_{f(i)}$$
Lemma is proved.
\end{proof}
\begin{lemma}
\llabel{2015.11.28.l2}
Let $m,n\in\nat$ and $f,g:l^{-1}(n)\sr l^{-1}(m)$ are two morphisms such that for all $i=0,\dots,m-1$ one has
$$f\circ \pi^m_i=g\circ \pi^m_g$$
Then $f=g$.
\end{lemma}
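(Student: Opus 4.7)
The plan is to proceed by induction on $m$, using the fact that the canonical square (\ref{2015.11.28.eq1}) for $m{+}1$ is a pull-back to reduce equality of morphisms into $l^{-1}(m{+}1)$ to equality after postcomposing with $p_{l^{-1}(m+1)}$ and with $q(\pi,l^{-1}(1))$.

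The base cases are immediate. For $m=0$, the object $l^{-1}(0)=pt$ is final, so any two morphisms into it agree. For $m=1$, one has $\pi^1_0=Id_{l^{-1}(1)}$, and the single hypothesis $f\circ \pi^1_0=g\circ \pi^1_0$ reads $f=g$.

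For the inductive step, suppose the claim holds for $m$, and let $f,g:l^{-1}(n)\sr l^{-1}(m+1)$ satisfy $f\circ \pi^{m+1}_i=g\circ \pi^{m+1}_i$ for all $i=0,\dots,m$. Since the square (\ref{2015.11.28.eq1}) for $m+1$ is a pull-back square (this is one of the C-system axioms), to conclude $f=g$ it suffices to show the two equalities
\begin{eq}
f\circ p_{l^{-1}(m+1)}=g\circ p_{l^{-1}(m+1)} \quad\mbox{and}\quad f\circ q(\pi,l^{-1}(1))=g\circ q(\pi,l^{-1}(1)).
\end{eq}
The second equality is just the case $i=m$ of the hypothesis, since $\pi^{m+1}_m=q(\pi,l^{-1}(1))$ by Construction \ref{2015.11.28.constr1}. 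For the first, the two sides are morphisms $l^{-1}(n)\sr l^{-1}(m)$, so by the inductive hypothesis it suffices to check that they agree after postcomposition with $\pi^m_i$ for each $i=0,\dots,m-1$. Using associativity and the defining relation $\pi^{m+1}_i=p_{l^{-1}(m+1)}\circ \pi^m_i$ (valid for $i<m$), we have
\begin{eq}
(f\circ p_{l^{-1}(m+1)})\circ \pi^m_i = f\circ \pi^{m+1}_i = g\circ \pi^{m+1}_i = (g\circ p_{l^{-1}(m+1)})\circ \pi^m_i,
\end{eq}
which is what we need.

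The main (mild) obstacle is simply spotting that the canonical square is the right pull-back to use — once that is noticed, the argument is entirely routine and the only ingredients are the definitions of $\pi^{n+1}_i$ and the C-system pull-back axiom. I note also that the statement of the lemma contains an apparent typo, ``$g\circ \pi^m_g$'', which should read ``$g\circ \pi^m_i$''; the proof above treats it as such.
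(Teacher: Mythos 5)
Your proof is correct and follows essentially the same route as the paper's: induction on $m$, with the base cases $m=0$ (final object) and $m=1$ (identity), and the successor case handled by reducing to the two components of the pull-back square (\ref{2015.11.28.eq1}), one of which is the $i=m$ hypothesis and the other of which follows from the inductive hypothesis. Your reading of ``$g\circ \pi^m_g$'' as a typo for ``$g\circ \pi^m_i$'' is also the correct one.
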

\begin{proof}
By induction on $m$.

For $m=0$ , $l^{-1}(m)$ is a final object and $f=g$.

For $m=1$, $\pi_0^1=Id_{l^{-1}(1)}$ and $f=f\circ Id=g\circ Id=g$.

For the successor the square (\ref{2015.11.28.eq1}) for $m+1$. Since the square is a pull-back square it is sufficient to show that 
$$f\circ p_{l^{-1}(m+1)}=g\circ p_{l^{-1}(m+1)}$$
and
$$f\circ q(\pi,l^{-1}(1))=g\circ q(\pi,l^{-1}(1))$$
The second equality follows from the fact that $q(\pi,l^{-1}(1))=\pi^{m+1}_m$.

The first equality follows by the inductive assumption since for $i=0,\dots,m-1$ we have
$$(f\circ p_{l^{-1}(m+1)})\circ \pi^m_i=f\circ \pi^{m+1}_i$$
and
$$(g\circ p_{l^{-1}(m+1)})\circ \pi^m_i=g\circ \pi^{m+1}_i$$
Lemma is proved.
\end{proof}
\begin{lemma}
\llabel{2015.11.28.l3}
\begin{enumerate}
\item for any $m\in\nat$ one has $L_{Id_{stn(m)}}=Id_{l^{-1}(m)}$,
\item for $k,m,n\in\nat$ and $f:stn(k)\sr stn(m)$, $g:stn(m)\sr stn(n)$ one has
$$L_{f\circ g}=L_g\circ L_f$$
\end{enumerate}
\end{lemma}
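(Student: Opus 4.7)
The plan is to deduce both assertions from the uniqueness principle established in Lemma \ref{2015.11.28.l2} together with the computation in Lemma \ref{2015.11.28.l1}. Recall that the former says two parallel morphisms into $l^{-1}(m)$ are equal as soon as they agree after post-composition with each projection $\pi^m_i$ for $i=0,\dots,m-1$, and the latter evaluates $L_f\circ \pi^m_i$ as $\pi^n_{f(i)}$. Both parts of the lemma then reduce to a routine comparison of morphisms using these two facts.

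For part (1), I would verify that $L_{Id_{stn(m)}}$ and $Id_{l^{-1}(m)}$ agree after post-composition with $\pi^m_i$ for each $i=0,\dots,m-1$. Applying Lemma \ref{2015.11.28.l1} to the identity function gives $L_{Id_{stn(m)}}\circ \pi^m_i=\pi^m_{Id(i)}=\pi^m_i$, which trivially matches $Id_{l^{-1}(m)}\circ \pi^m_i=\pi^m_i$. Lemma \ref{2015.11.28.l2} then forces the equality $L_{Id_{stn(m)}}=Id_{l^{-1}(m)}$.

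For part (2), both $L_{f\circ g}$ and $L_g\circ L_f$ are morphisms $l^{-1}(n)\sr l^{-1}(k)$ (note the reversal: $L$ is contravariant in the sense of Construction \ref{2015.11.28.constr2}), so it is enough to check that they become equal after post-composition with $\pi^k_i$ for $i=0,\dots,k-1$. By Lemma \ref{2015.11.28.l1},
$$L_{f\circ g}\circ \pi^k_i=\pi^n_{(f\circ g)(i)}=\pi^n_{g(f(i))},$$
using the diagrammatic convention of the paper. A double application of the same lemma yields
$$L_g\circ L_f\circ \pi^k_i=L_g\circ \pi^m_{f(i)}=\pi^n_{g(f(i))}.$$
A second invocation of Lemma \ref{2015.11.28.l2} then closes the argument.

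There is no substantive obstacle: the real work has already been packaged into Lemmas \ref{2015.11.28.l1} and \ref{2015.11.28.l2}, and the present lemma is essentially a formal corollary obtained by comparing the universal descriptions of the two morphisms. The only point deserving attention is bookkeeping — tracking the reversal of direction between $stn(-)$ and $l^{-1}(-)$, and keeping the diagrammatic order of composition consistent so that $L_g\circ L_f$ matches $L_{f\circ g}$ rather than its opposite.
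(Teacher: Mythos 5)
Your proof is correct and is exactly the argument the paper intends: the paper's own proof simply states that both parts follow from Lemmas \ref{2015.11.28.l2} and \ref{2015.11.28.l1}, and you have filled in precisely those details, with the diagrammatic-order bookkeeping handled correctly.
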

\begin{proof}
Both cases follow in a straightforward way from Lemmas \ref{2015.11.28.l2} and \ref{2015.11.28.l1}. 
\end{proof}
We can now provide a construction for Problem \ref{2015.11.24.prob2}.
\begin{construction}\rm
\llabel{2015.11.30.constr1}
We need to construct a Lawvere theory structure on $CC^{op}$, i.e. a functor $L:F\sr CC^{op}$ satisfying the conditions of Definition \ref{2015.11.24.def1}. We define the object part of $L$ as $l^{-1}$. We define the morphism part of $L$ as $L_{Mor}(f)=L_f$. Lemma \ref{2015.11.28.l3} shows that $L$ is a covariant functor to $CC^{op}$.  

The first condition of Definition \ref{2015.11.24.def1} is obvious. The second condition as well (it follows from the axioms of a C-system). To prove the third condition we first apply Lemma \ref{2015.11.28.l4}. It remains to prove that squares of the form (\ref{2015.11.28.eq2}) are push-out squares in $CC^{op}$ or, equivalently, that squares of the form
\begin{eq}\llabel{2015.11.30.eq1}
\begin{CD}
l^{-1}(m+1) @>L_{ii_2^{m,1}}>> l^{-1}(1)\\
@VL_{ii_1^{1,m}}VV @VVV\\
l^{-1}(m) @>>> l^{-1}(0)
\end{CD}
\end{eq}
in $CC$ are pull-back squares. We will do it by showing that the square (\ref{2015.11.30.eq1}) equals to the canonical square of the C-system structure for the pair $(\pi_{l^{-1}(m)}, l^{-1}(1))$. The right hand side vertical morphism, $\pi_{l^{-1}(1)}$,  is a unique morphism from $l^{-1}(1)$ to $l^{-1}(0)$ and since $ft(l^{-1}(1))=l^{-1}(0)$ it equals $p_{l^{-1}(1)}$. 

It remains to show that
\begin{eq}\llabel{2015.12.02.eq1}
L_{ii_1^{m,1}}=p_{l^{-1}(m+1)}
\end{eq}
and
\begin{eq}\llabel{2015.12.02.eq2}
L_{ii_2^{m,1}}=q(\pi_{l^{-1}(m)},l^{-1}(1))
\end{eq}
These equalities follow from the equalities (\ref{2015.11.30.eq2}) and (\ref{2015.11.30.eq3}) for $f=Id_{l^{-1}(m+1)}$ because of Lemma \ref{2015.11.28.l3}(1). The construction is completed.  
\end{construction}

Next we will show that our function on objects extends to a functor from the category of l-bijective C-systems to the category of Lawvere theories. First we need the following lemma.
\begin{lemma}
\llabel{2015.12.08.l3}
Let $H:CC_1\sr CC_2$ be a homomorphism of l-bijective C-systems. The for any $n\in\nat$ and $i=0,\dots,n-1$ one has
\begin{eq}\llabel{2015.12.08.eq3}
H(\pi^n_i)=\pi^n_i
\end{eq}
\end{lemma}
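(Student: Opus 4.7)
The plan is to proceed by induction on $n$, using the fact that a homomorphism $H$ of C-systems is, by definition, compatible with the length function, the final object $pt$, the map $ft$, the $p$-morphisms and the $q$-morphisms. In particular, since $H$ commutes with $l$ and $l_1, l_2$ are bijections, one has $H(l^{-1}(n)) = l^{-1}(n)$ for every $n$, which guarantees that both sides of (\ref{2015.12.08.eq3}) live in the same hom-set.

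The base case $n=1$ is immediate since $\pi^1_0 = Id_{l^{-1}(1)}$ and $H$ preserves identity morphisms (and maps $l^{-1}(1)$ to $l^{-1}(1)$ by the above remark).

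For the inductive step, I would split according to whether $i < n$ or $i = n$ in the construction of $\pi^{n+1}_i$. When $i < n$, we have $\pi^{n+1}_i = p_{l^{-1}(n+1)} \circ \pi^n_i$, so applying $H$ (which is a functor) and using the compatibility of $H$ with $p$ together with the inductive hypothesis gives
$$H(\pi^{n+1}_i) = H(p_{l^{-1}(n+1)}) \circ H(\pi^n_i) = p_{l^{-1}(n+1)} \circ \pi^n_i = \pi^{n+1}_i.$$
When $i = n$, we have $\pi^{n+1}_n = q(\pi_{l^{-1}(n)}, l^{-1}(1))$, where $\pi_{l^{-1}(n)}$ denotes the unique morphism from $l^{-1}(n)$ to the final object $l^{-1}(0) = pt$. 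Since $H$ preserves the final object, $H(\pi_{l^{-1}(n)})$ is the unique morphism $l^{-1}(n) \to pt$, i.e.\ it equals $\pi_{l^{-1}(n)}$. Then compatibility of $H$ with $q$ yields
$$H(\pi^{n+1}_n) = H(q(\pi_{l^{-1}(n)}, l^{-1}(1))) = q(H(\pi_{l^{-1}(n)}), H(l^{-1}(1))) = q(\pi_{l^{-1}(n)}, l^{-1}(1)) = \pi^{n+1}_n.$$

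No step here is a serious obstacle; the proof is a mechanical unrolling of Construction \ref{2015.11.28.constr1} combined with the homomorphism axioms. The only small point to keep honest is the identification $H(\pi_{l^{-1}(n)}) = \pi_{l^{-1}(n)}$, which uses that $H$ sends the distinguished final object to the distinguished final object and hence sends the unique map into it to the unique map into it.
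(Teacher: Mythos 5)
Your proof is correct and follows essentially the same route as the paper: induction on $n$, with the base case from $H$ preserving identities, the $i<n$ case from functoriality plus compatibility with $p$, and the $i=n$ case from compatibility with $q$ (where you usefully spell out why $H(\pi_{l^{-1}(n)})=\pi_{l^{-1}(n)}$, a point the paper leaves implicit). The only omission is the trivial $n=0$ case, where there is nothing to prove.
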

\begin{proof}
Note that since we have $l_2(H(X))=l_1(X)$ both sides of (\ref{2015.12.08.eq3}) are morphisms from $l_2^{-1}(n)$ to $l_2^{-1}(1)$. 

The proof is by induction on $n$.

For $n=0$ there are no equations to prove.

For $n=1$ we have $\pi^1_0=Id_{l^{-1}(1)}$ and the statement of the lemma follows from the identity axiom of the definition of a functor.

For the successor we have two cases. For $i<n$ we have
$$H(\pi^{n+1}_i)=H(p_{l^{-1}(n+1)}\circ \pi_i^n)=H(p_{l^{-1}(n+1)})\circ H(\pi^n_i)=p_{l^{-1}(n+1)}\circ \pi_i^n=\pi^{n+1}_i$$
where the third equality uses the inductive assumption. For $i=n$ we have
$$H(\pi^{n+1}_n)=H(q(\pi,l^{-1}(1)))=q(\pi,l^{-1}(1))=\pi^{n+1}_n$$
Lemma is proved. 
\end{proof} 
\begin{lemma}\llabel{2015.12.08.l2}
Let $G:(CC_1,cs_1)\sr (CC_2,cs_2)$ be a homomorphism of C-systems. Then the functor $G^{op}:CC_1^{op}\sr CC_2^{op}$ is a morphism of Lawvere theories $(CC_1^{op},CL(cs_1))\sr (CC_2^{op},CL(cs_2))$.
\end{lemma}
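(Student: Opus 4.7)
The goal is to verify $CL(cs_1) \circ G^{op} = CL(cs_2)$ as functors $F \to CC_2^{op}$. I will handle the object part and the morphism part separately, and the heavy lifting is already done by Lemmas \ref{2015.12.08.l3}, \ref{2015.11.28.l1}, and \ref{2015.11.28.l2}.

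For the object part, on $n \in \nat$ we have $CL(cs_i)(n) = l_i^{-1}(n)$, so the required equality $G^{op}(CL(cs_1)(n)) = CL(cs_2)(n)$ becomes $G(l_1^{-1}(n)) = l_2^{-1}(n)$. Since $G$ is a homomorphism of C-systems we have $l_2 \circ G = l_1$, hence $l_2(G(l_1^{-1}(n))) = n$, and because $l_2$ is a bijection this forces $G(l_1^{-1}(n)) = l_2^{-1}(n)$.

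For the morphism part, given $f:stn(m)\sr stn(n)$ I must show that the morphism $L_f^{(1)}:l_1^{-1}(n)\sr l_1^{-1}(m)$ built from $cs_1$ is sent by $G$ to the morphism $L_f^{(2)}:l_2^{-1}(n)\sr l_2^{-1}(m)$ built from $cs_2$ (reading everything in $CC_i$, not $CC_i^{op}$). By Lemma \ref{2015.11.28.l1} applied in $CC_2$, both $G(L_f^{(1)})$ and $L_f^{(2)}$ are morphisms $l_2^{-1}(n)\sr l_2^{-1}(m)$, and $L_f^{(2)}\circ \pi^m_i = \pi^n_{f(i)}$ for each $i=0,\dots,m-1$. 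On the other hand, using that $G$ is a functor, Lemma \ref{2015.12.08.l3} (which gives $G(\pi^k_j) = \pi^k_j$ in the appropriate sense), and Lemma \ref{2015.11.28.l1} in $CC_1$, we compute
$$G(L_f^{(1)})\circ \pi^m_i = G(L_f^{(1)})\circ G(\pi^m_i) = G(L_f^{(1)}\circ \pi^m_i) = G(\pi^n_{f(i)}) = \pi^n_{f(i)}.$$
Thus $G(L_f^{(1)})$ and $L_f^{(2)}$ agree after post-composition with each $\pi^m_i$, and Lemma \ref{2015.11.28.l2} gives $G(L_f^{(1)}) = L_f^{(2)}$.

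There is no real obstacle here once one notices that Lemma \ref{2015.12.08.l3} and Lemma \ref{2015.11.28.l2} together reduce the statement to the inductive characterisation of $L_f$ by the projections $\pi^m_i$; the only care needed is bookkeeping the contravariance, i.e.\ remembering that the arrows of $L_i$ in $CC_i^{op}$ are the arrows $L_f^{(i)}$ in $CC_i$ with reversed direction, so that the equality of functors $CL(cs_1)\circ G^{op} = CL(cs_2)$ is indeed equivalent to the equalities $G(l_1^{-1}(n)) = l_2^{-1}(n)$ and $G(L_f^{(1)}) = L_f^{(2)}$ verified above.
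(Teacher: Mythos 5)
Your proof is correct and follows essentially the same route as the paper: reduce to the equality $G(L_{1,f})=L_{2,f}$ in the underlying categories, apply Lemma \ref{2015.11.28.l2} to test against the projections, and compute via functoriality of $G$ together with Lemmas \ref{2015.12.08.l3} and \ref{2015.11.28.l1}. You even get the indices right where the paper's text has a small slip (it writes $\pi^n_i$ for $i=0,\dots,n-1$ where the projections out of the codomain $l^{-1}(m)$, i.e.\ $\pi^m_i$ for $i=0,\dots,m-1$, are what Lemma \ref{2015.11.28.l2} requires).
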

\begin{proof}
Let $CL(cs_1)=L_1$ and $CL(cs_2)=L_2$. We need to show that $L_1\circ G^{op}=L_2$. The equality between the object components of these functors follows from the fact that a homomorphism of C-systems is compatible with the length functions. 
For the morphism component it is more convenient to consider equivalent equation 
$$L_1'\circ G=L_2'$$
where $L_i':F^{op}\sr CC_i$. Then we have to show that for any $f:stn(m)\sr stn(n)$ one has $G(L_{1,f})=L_{2,f}$. Both sides of this equality are morphisms $l_2^{-1}(n)\sr l_2^{-1}(m)$. By Lemma \ref{2015.11.28.l2} it is sufficient to show that $G(L_{1,f})\circ \pi^n_i=L_{2,f}\circ \pi^n_i$ for all $i=0,\dots,n-1$. We have
$$G(L_{1,f})\circ \pi^n_i=G(L_{1,f})\circ G(\pi^n_i)=G(L_{1,f}\circ \pi^n_i)=G(\pi^n_{f(i)})=\pi^n_{f(i)}$$
where we used Lemma \ref{2015.11.28.l1} and twice Lemma \ref{2015.12.08.l3}. On the other hand
$$L_{2,f}\circ \pi^n_i=\pi^n_{f(i)}$$
again by Lemma \ref{2015.11.28.l1}. This completes the proof of Lemma \ref{2015.12.08.l2}.
\end{proof}
\begin{problem}\llabel{2015.12.08.prob2}
For any universe $U$ to construct a functor $CL_U:Cs_{\nat}(U)\sr LW(U)$.
\end{problem}
\begin{construction}\rm\llabel{2015.12.08.constr3}
The object component of $CL_U$ takes a C-system $(CC,cs)$ to the Lawvere theory $(CC^{op},CL(cs))$ where $CL(cs)$ is defined by Construction \ref{2015.11.30.constr1}.

The morphism component takes a homomorphism $G:(CC_1,cs_1)\sr (CC_2,cs_2)$ to $G^{op}$. It is well defined by Lemma \ref{2015.12.08.l2}. 

The identity and composition axioms are straightforward from the corresponding properties of functor composition and its compatibility with the function $G\mapsto G^{op}$. 
\end{construction}

\subsection{Isomorphism theorem}

\begin{theorem}
\llabel{2015.12.08.th1}
For any universe $U$, Constructions \ref{2015.12.08.constr1} and \ref{2015.12.08.constr3} define mutually inverse isomorphisms between the categories of Lawvere theories in $U$ and l-bijective C-systems in $U$. 
\end{theorem}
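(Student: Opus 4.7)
The plan is to verify the two compositional identities $CL_U \circ LC_U = \mathrm{Id}_{LW(U)}$ and $LC_U \circ CL_U = \mathrm{Id}_{Cs_{\nat}(U)}$. Both functors act on morphisms by $G \mapsto G^{op}$, and since $(G^{op})^{op} = G$ on the nose, the morphism parts of the composites are automatically the identity. The entire argument thus reduces to checking that each object part is the identity, i.e., that each construction applied after the other recovers the original structure as a tuple of data and not merely up to isomorphism.

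For $CL_U \circ LC_U$, start with $(T, L)$, pass through $(T^{op}, LC(L))$ on $CC := T^{op}$, and extract the Lawvere theory $(T, \widetilde L)$ via Construction \ref{2015.11.30.constr1}. On objects $\widetilde L(n) = l^{-1}(n) = L(n)$ since $l = L^{-1}$. On morphisms one must show $L_f = L(f)$ in $CC$ for every $f : stn(m) \to stn(n)$ in $F$. By Lemma \ref{2015.11.28.l2} it suffices to verify that $L(f) \circ \pi^m_i = L_f \circ \pi^m_i = \pi^n_{f(i)}$ in $CC$ for each $i$, where the second equality is Lemma \ref{2015.11.28.l1}. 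The key preliminary identification is $\pi^n_j = L(v_j)$, where $v_j : stn(1) \to stn(n)$ is the function $0 \mapsto j$; this follows by induction on $n$ from $p_{l^{-1}(n+1)} = L(ii_1^{n,1})$ and $q(\pi, l^{-1}(1)) = L(ii_2^{n,1})$, which are read off directly from Construction \ref{2015.11.24.constr1} and equation (\ref{2015.12.02.eq8}). Granted this, $L(f) \circ L(v_i) = L(v_i \circ f) = L(v_{f(i)}) = \pi^n_{f(i)}$ by functoriality of $L$, with the composition reversed in passing from $T$ to $CC$.

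For $LC_U \circ CL_U$, start with $(CC, cs)$ and set $cs' = LC(CL(cs))$; we verify $cs' = cs$ componentwise. The length $l'$ inverts the object map $n \mapsto l^{-1}(n)$ of $CL(cs)$, so $l' = l$; the distinguished final object is $l^{-1}(0) = pt$; and for $l(X) > 0$, since $l$ is a bijection and $l(ft(X)) = l(X) - 1$, one has $ft'(X) = l^{-1}(l(X) - 1) = ft(X)$. Equation (\ref{2015.12.02.eq1}) gives $p'_X = p_X$. For the $q$-operation, given $f : Y \to ft(X)$, both structures produce $f^*(X) = l^{-1}(l(Y) + 1)$, and $q'(f, X)$ is characterized by (\ref{2015.12.02.eq7}) and (\ref{2015.12.02.eq8}); both equations are satisfied by the original $q(f, X)$ --- the first is the canonical commutativity in $cs$ itself, and the second rewrites via (\ref{2015.12.02.eq2}) to an instance of the standard composition law for $q$ in $cs$. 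The section operation $s$ is then forced by \cite[Proposition 2.4]{Csubsystems} to agree as well.

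The main obstacle is the preliminary identification $\pi^n_j = L(v_j)$ in the first half, which is where the two constructions genuinely interlock: one must carefully unwind the inductive definition of $\pi^n_j$ and match each recursive step against the corresponding defining property of $LC(L)$. Once that identification is in hand, all remaining verifications reduce to direct unfoldings of Constructions \ref{2015.11.24.constr1} and \ref{2015.11.30.constr1}.
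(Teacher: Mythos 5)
Your proposal is correct, and for the C-system round trip, the final step on morphisms (via $(G^{op})^{op}=G$), and the object-level bookkeeping it coincides with the paper's argument. Where you genuinely diverge is in the Lawvere-theory round trip, i.e., the proof that $L(f)=L_f$ for $f:stn(m)\sr stn(n)$. The paper runs an induction on $m$ that directly unwinds the recursive definition of $L_f$ from Construction \ref{2015.11.28.constr2}, using the pull-back characterization (\ref{2015.11.30.eq2})--(\ref{2015.11.30.eq3}) at the successor step; inside that induction, the case $m=1$ is handled by a second induction on $n$. You instead invoke Lemma \ref{2015.11.28.l2} to reduce the whole comparison to checking components against the projections $\pi^m_i$, which by Lemma \ref{2015.11.28.l1} turns the goal into $L(f)\circ\pi^m_i=\pi^n_{f(i)}$; this in turn follows from your key identification $\pi^n_j=L(v_j)$ together with functoriality of $L$. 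That identification is exactly the paper's $m=1$ case (and your derivation of it from $p_{l^{-1}(n+1)}=L(ii_1^{n,1})$ and equation (\ref{2015.12.02.eq8}) matches the paper's computation), so the two proofs share their computational core; but your route eliminates the outer induction on $m$ entirely and replaces it with the joint-monicity of the projections, which is both shorter and consistent with how the paper itself proves Lemma \ref{2015.12.08.l2}. The trade-off is negligible: Lemma \ref{2015.11.28.l2} is already available, so nothing extra is needed, and your organization arguably makes clearer that the only nontrivial content is the matching of the two product structures on $l^{-1}(m)$.
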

\begin{proof}
Let us show first that the object components of the functors $LC_U$ and $CL_U$ are mutually inverse bijections of sets. We write $LC$ for $(LC_U)_{Ob}$ and similarly for $CL$. 

Let us consider first the composition $LC\circ CL$ of the function of Construction \ref{2015.11.24.constr1} with the function of Construction \ref{2015.11.30.constr1}. 

Let $Lw$ be a Lawvere theory structure on $T$ and let
$$Lw'=(LC\circ CL)(Lw)=CL(LC(Lw))$$
We have $Lw=(L:F\sr T)$ and $Lw'=(L':F\sr T)$. We have to prove that $L'(f)=L(f)$. To distinguish $L'$ and $L$ more clearly we will use the notation $L'_f$ for $L'(f)$ as we did in Construction \ref{2015.11.30.constr1}. 

On objects we have
$$L'(n)=l^{-1}(n)=(L^{-1})^{-1}(n)=L(n)$$
Let $f:stn(m)\sr stn(n)$ be a morphism in $F$. 

Since $L$ and $L'$ coincide on objects both morphisms $L(f)$ and $L'_f$ are of the form $L(n)\sr L(m)$ when considered as contravariant functors to $CC=T^{op}$.

The prove is by induction on $m$. 

If $m=0$, $L(m)=L'(m)$ is a final object and any two morphisms with it as a codomain are equal.

If $m=1$ we have by  Constructions \ref{2015.11.30.constr1} and \ref{2015.11.28.constr2}  that 
$$L'(f)=L'_f=\pi^n_{f(0)}$$
We need to show that
$$L(f)=\pi^n_{f(0)}$$
We prove these equalities by induction on $n$.  

If $n=0$ then $f$ exists only if $m=0$ and we have already considered this case.

Let $(l,pt,ft,p,q,s)$ be the components of $LC(L)$.  If $n=1$ then by Construction \ref{2015.11.28.constr1} we have $\pi^n_{f(0)}=\pi^n_{0}=Id_{l^{-1}(1)}$ and since $L$ is a functor and $f=Id_{l^{-1}(0)}$ we have $L(f)=Id_{l^{-1}(1)}$.

For the successor we should consider diagram (\ref{2015.11.28.eq1}). We have that $\pi^{n+1}_{f(0)}$ is given by:
$$\pi^{n+1}_{f(0)}=\left\{
\begin{array}{ll}
p_{l^{-1}(n+1)}\circ \pi^n_{(f(0)}&\mbox{\rm for $f(0)<n$}\\
q(\pi,l^{-1}(1))&\mbox{\rm for $f(0)=n$}
\end{array}
\right.
$$
Assume that $f(0)<n$. Then, by the inductive assumption we have $\pi^n_{(f(0)}=L(f)$. By (\ref{2015.12.02.eq1}) we have $p_{l^{-1}(n+1)}=L(ii_1^{n,1})$ and since $L$ is a functor we get
$$\pi^{n+1}_{f(0)}=L(ii_1^{n,1})\circ \pi^n_{(f(0))}=L(ii_1^{n,1})\circ L(g)$$
where $g:stn(1)\sr stn(n)$ is such that $g(0)=f(0)$. Since $L$ is a contravariant functor we get
$$L(ii_1^{n,1})\circ L(g)=L(g\circ ii_1^{n,1})=L(f)$$
For $f(0)=n$ we have 
$\pi^{n+1}_{f(0)}=q(\pi,l^{-1}(1))=L(ii_2^{n,1})=L(f)$
where the second equality is by (\ref{2015.12.02.eq2}).

We have to consider now the case of the successor of $m$. 

The morphism $L'_f$ for $f:stn(m+1)\sr stn(n)$ is defined in (\ref{2015.11.30.eq2}) and (\ref{2015.11.30.eq3}) as the unique morphism such that 
\begin{eq}
L'_f\circ p_{l^{-1}(m+1)}=L'_{ii_1^{m,1}\circ f}
\end{eq}
and
\begin{eq}
L'_f\circ q(\pi,l^{-1}(1))=L'_{ii_2^{m,1}\circ f}
\end{eq}
By the inductive assumption we have 
$$L'_{ii_1^{m,1}\circ f}=L(ii_1^{m,1}\circ f)$$
$$L'_{ii_2^{m,1}\circ f}=L(ii_2^{m,1}\circ f)$$
It remains to prove that
\begin{eq}\llabel{2015.12.02.eq5}
L(f)\circ p_{l^{-1}(m+1)}=L(ii_1^{m,1}\circ f)
\end{eq}
and
\begin{eq}\llabel{2015.12.02.eq6}
L(f)\circ q(\pi_{l^{-1}(m)},l^{-1}(1))=L(ii_2^{m,1}\circ f)
\end{eq}
By (\ref{2015.12.02.eq1}) and (\ref{2015.12.02.eq2}) we have 
$$p_{l^{-1}(m+1)}=L'_{ii_1^{1,m}}=L(ii_1^{1,m})$$
$$q(\pi_{l^{-1}(m)},l^{-1})=L'_{ii_2^{m,1}}=L(ii_2^{m,1})$$
Where the second equalities are by the inductive assumption on $m$. This proves equalities (\ref{2015.12.02.eq5}) and (\ref{2015.12.02.eq6}).

Let us consider now the composition $CL\circ LC$. Fix a category $CC$ in $U$. We need to show that for a l-bijective C-system structure $cs$ one has $LC(CL(cs))=cs$. Let $CL(cs)=(L)$ and let
$$cs=(l,pt,ft,p,q,s)$$
and
$$LC(CL(cs))=(l',pt',ft',p',q',s')$$

Then on objects $L=l^{-1}$ and $l'=L^{1-}$. Therefore $l=l'$.

Next $pt=l^{-1}(0)=(l')^{-1}(0)=pt'$. Similarly we see that $ft=ft'$ since on a l-bijective C-system $ft$ is determined by $l$. 

For $X$ such that $l(X)>0$ we have by Construction \ref{2015.11.24.constr1} that $p'_X=L(ii_1^{l(X)-1,1})$. Together with (\ref{2015.12.02.eq1} we obtain
$$p'_X=L(ii_1^{l(X)-1,1})=p_{l^{-1}(l(X)}$$
since the C-system is l-bijective we have $X=l^{-1}(l(X)$ and therefore $p'=p$. 

The morphism $q'$ is defined in Construction \ref{2015.11.24.constr1} as the unique morphism such that (\ref{2015.12.02.eq7}) and (\ref{2015.12.02.eq8}) hold. In our notation these equations take the form:
\begin{eq}
q'(f,X)\circ p'_X=p'_{f^*(X)}\circ f
\end{eq}
and
\begin{eq}
\llabel{2015.12.02.eq9}
q'(f,X)\circ L(ii_2^{l(X)-1,1})=L(ii_2^{l(Y),1})
\end{eq}
We need to check that the same equations hold for $q$. For the first one it follows immediately from the fact that $p'=p$. 

To prove the second one consider equation (\ref{2015.12.02.eq2}). Applying this equation to (\ref{2015.12.02.eq9}) for $q$ and using the fact that of C-systems are l-bijective we get
$$q(f,X)\circ q(\pi_{ft(X)},l^{-1}(1)=q(\pi_Y, l^{-1}(1))$$
which a particular case of the composition axiom for $q$ (see \cite[Definition 2.1(7)]{Csubsystems}). 

This proves that $CL_U$ and $LC_U$ are mutually inverse bijections on the sets of objects of our categories. 

The fact that they give mutually inverse functions on morphisms between each pair of objects is straightforward. Indeed
$$CL_{U,Mor}(G)=G^{op}$$
and
$$LC_{U,Mor}(G)=G^{op}$$
as functors and since the mappings from morphisms of Lawvere theories and homomorphisms of C-systems to functors between the corresponding categories are injective we see that $CL_{U,Mor}$ and $LC_{U,Mor}$ are mutually inverse bijections. 
\end{proof}

{\em Acknowledgements:} This material is based on research sponsored by The United States Air Force Research Laboratory under agreement number FA9550-15-1-0053. The US Government is authorized to reproduce and distribute reprints for Governmental purposes notwithstanding  any copyright notation thereon.

The views and conclusions contained herein are those of the author and should not be interpreted as necessarily representing the official policies or endorsements, either expressed or implied, of the United States Air Force Research Laboratory, the U.S. Government or Carnegie Melon University.


\begin{thebibliography}{1}

\bibitem{Bourbaki.Sets}
Nicolas Bourbaki.
\newblock {\em Theory of sets}.
\newblock Elements of Mathematics (Berlin). Springer-Verlag, Berlin, 2004.
\newblock Reprint of the 1968 English translation [Hermann, Paris; MR0237342].

\bibitem{Cartmell0}
John Cartmell.
\newblock Generalised algebraic theories and contextual categories.
\newblock {\em Ph.D. Thesis, Oxford University}, 1978.
\newblock \url{https://uf-ias-2012.wikispaces.com/Semantics+of+type+theory}.

\bibitem{Cartmell1}
John Cartmell.
\newblock Generalised algebraic theories and contextual categories.
\newblock {\em Ann. Pure Appl. Logic}, 32(3):209--243, 1986.

\bibitem{Lawvere}
F.~William Lawvere.
\newblock Functorial semantics of algebraic theories and some algebraic
  problems in the context of functorial semantics of algebraic theories.
\newblock {\em Repr. Theory Appl. Categ.}, (5):1--121, 2004.
\newblock Reprinted from Proc. Nat. Acad. Sci. U.S.A. {{\bf{5}}0} (1963),
  869--872 [MR0158921] and {{\i}t Reports of the Midwest Category Seminar. II},
  41--61, Springer, Berlin, 1968 [MR0231882].

\bibitem{Cfromauniverse}
Vladimir Voevodsky.
\newblock A {C}-system defined by a universe category.
\newblock {\em Theory Appl. Categ.}, 30:No. 37, 1181--1215, 2015.

\bibitem{Csubsystems}
Vladimir Voevodsky.
\newblock Subsystems and regular quotients of {C-systems}.
\newblock In {\em Conference on Mathematics and its Applications, (Kuwait City,
  2014)}, number to appear, pages 1--11, 2015.

\end{thebibliography}

\def\cprime{$'$}

\end{document}